\documentclass[reqno,12pt]{amsart}

\usepackage[
  letterpaper,
  textwidth=15cm,
  textheight=22cm,
  top=1in,
  hcentering
]{geometry}

\usepackage{amsmath,amssymb,amsthm,mathtools}
\usepackage{microtype}
\usepackage{xcolor}
\usepackage{hyperref}
\usepackage[nameinlink,capitalize]{cleveref}

\hypersetup{
  colorlinks=true,
  linkcolor=blue!60!black,
  citecolor=blue!60!black,
  urlcolor=blue!60!black,
  pdftitle={Perturbed Polynomial Powers and Bourgain Entropy Obstructions for Khintchin Averages},
  pdfauthor={Zhuowen Guo and Shuhao Zhang}
}

\newtheorem{theorem}{Theorem}[section]
\newtheorem{corollary}[theorem]{Corollary}
\newtheorem{proposition}[theorem]{Proposition}
\newtheorem{lemma}[theorem]{Lemma}

\theoremstyle{definition}
\newtheorem{definition}[theorem]{Definition}

\numberwithin{equation}{section}

\crefname{theorem}{Theorem}{Theorems}
\Crefname{theorem}{Theorem}{Theorems}
\crefname{proposition}{Proposition}{Propositions}
\Crefname{proposition}{Proposition}{Propositions}
\crefname{corollary}{Corollary}{Corollaries}
\Crefname{corollary}{Corollary}{Corollaries}
\crefname{lemma}{Lemma}{Lemmas}
\Crefname{lemma}{Lemma}{Lemmas}
\crefname{definition}{Definition}{Definitions}
\Crefname{definition}{Definition}{Definitions}
\crefname{section}{Section}{Sections}
\Crefname{section}{Section}{Sections}

\newcommand{\T}{\mathbb T}
\newcommand{\N}{\mathbb N}
\newcommand{\Z}{\mathbb Z}
\newcommand{\R}{\mathbb R}
\newcommand{\cQ}{\mathcal Q}
\newcommand{\PFNP}{\mathsf{PFNP}}
\newcommand{\ee}{\mathrm e}
\newcommand{\eps}{\varepsilon}
\newcommand{\abs}[1]{\left\lvert #1\right\rvert}
\newcommand{\norm}[1]{\left\lVert #1\right\rVert}
\newcommand{\Ent}{\operatorname{Ent}}

\title[Perturbed polynomial powers and Khintchin averages]{Perturbed Polynomial Powers and Bourgain Entropy Obstructions for Khintchin Averages}

\author{Zhuowen Guo}
\address{School of Mathematical Sciences, University of Science and Technology of China, Hefei, Anhui 230026, P.R. China}
\email{guozw0920@mail.ustc.edu.cn}

\author{Shuhao Zhang}
\address{School of Mathematical Sciences, University of Science and Technology of China, Hefei, Anhui 230026, P.R. China}
\email{yichen12@mail.ustc.edu.cn}

\date{June 10, 2026}

\subjclass[2020]{Primary 37A45; Secondary 11K06, 11B83, 42A61}
\keywords{Khintchin averages, Bourgain entropy criterion, translated powers, primitive divisors, polynomial powers, uniform distribution}

\begin{document}

\begin{abstract}
Let \(a\ge2\), let \(p\in\Z[n]\) be eventually increasing and eventually non-negative, and let
\(\lambda_n=a^{p(n)}+f(n)>0\), where \(f(n)\in\Z\).  We prove, using
Bourgain's bounded entropy criterion, that if
\(\log(1+f(n)a^{-p(n)})\) is eventually non-zero and decays geometrically
in absolute value, then \((\lambda_n)\) is neither \(L^\infty\)-Khintchin nor
\(L^1\)-Khintchin.

In particular, for every \(c\in\Z\setminus\{0\}\), every positive tail of
\((a^{p(n)}+c)_{n\ge1}\) is non-Khintchin.  The same conclusion applies to
the standard examples \(a^n+c\), \(a^n+b^n\), and, whenever eventually
positive, \(a^n-b^n\), with \(a\neq b\).  Thus these perturbations of
geometric powers lie on the unstable side of the Khintchin problem.  This
gives a negative answer, in the translated-power case, to the question of
Fan--Fan--Queff\'elec--Queff\'elec on the stability of translated powers.
\end{abstract}

\maketitle

\section{Introduction}
We study the almost-everywhere convergence of the averages \(A_N^\lambda f\)
for measurable functions \(f\), especially at the endpoint spaces
\(L^\infty\) and \(L^1\). This problem is closely related to Weyl's metric
theorem.

Our main object is the circle group \(\mathbb T=\mathbb R/\mathbb Z\),
equipped with its Haar probability measure \(m\). For a sequence
\(\lambda=(\lambda_n)_{n\ge 1}\) of positive integers, define
\[
  A_N^\lambda f(x)=\frac1N\sum_{n\le N} f(\lambda_n x),
  \qquad x\in\mathbb T.
\]
We say that \(\lambda\) is \(L^p\)-Khintchin, \(1\le p\le \infty\), if
\[
  A_N^\lambda f(x)\longrightarrow \int_{\mathbb T} f\,dm
  \quad\text{for a.e. }x\in\mathbb T
\]
for every \(f\in L^p(\mathbb T)\).

The study of \(L^p\)-Khintchin sequences, and in particular of the endpoint
case \(p=1\), is a natural and delicate problem. By Weyl's metric theorem  \cite{Weyl1916}, if
the integers \(\lambda_n\) are distinct, then the sequence \((\lambda_n x)\)
is uniformly distributed modulo one for a.e. \(x\). Hence the above convergence
holds for every continuous function, and more generally for every Riemann
integrable function. Khintchine \cite{Khintchine1923} asked whether, for the special
sequence \(\lambda_n=n\), the same conclusion remains valid for characteristic
functions of measurable sets. Marstrand \cite{Marstrand1970} disproved this
and, more generally, exhibited broad families of sequences for which the
endpoint assertion fails, including polynomial sequences, sequences with
power-law counting function, and pairwise coprime sequences.

There are also positive examples.  For geometric powers \(\lambda_n=q^n\), the averages are Birkhoff averages for the map \(x\mapsto qx\) on \(\T\).  Raikov and Riesz  \cite{Raikov1936,Riesz1945} proved the corresponding \(L^1\) convergence.  More generally, Marstrand and later Nair \cite{Marstrand1970,Nair1990} studied positive results for finite-rank multiplicative semigroups .  These examples show that lacunarity alone is not the issue.  The geometric sequence \((2^n)\) is positive, while the translated sequence \((2^n+1)\) behaves very differently.

Fan, Fan, Queffelec and Queffelec \cite{FanFanQueffelecQueffelec2026} formulated the modern Khintchin class of a sequence and asked whether the Khintchin property is stable under translation; the sequence \((2^n+1)\) is the basic test case.  We answer this in the negative as part of a more general statement for perturbed polynomial powers.

\begin{theorem}[perturbed polynomial powers]\label{thm:perturbed-polynomial-powers}
Let \(a\ge2\), let \(p\in\Z[n]\) be eventually increasing and eventually non-negative, and let
\[
  \lambda_n=a^{p(n)}+f(n)>0,\qquad f(n)\in\Z.
\]
Put
\[
  \theta_n=\log\left(1+f(n)a^{-p(n)}\right).
\]
If
\[
  0<\abs{\theta_{n+1}}\le \rho \abs{\theta_n}\qquad(n\ge n_0)
\]
for some \(0<\rho<1\), then \((\lambda_n)\) is neither \(L^\infty\)-Khintchin nor \(L^1\)-Khintchin.

In particular, for every \(c\in\Z\setminus\{0\}\), every positive tail of $\lambda_n=a^{p(n)}+c$
is non-Khintchin.
\end{theorem}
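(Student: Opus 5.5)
We will use Bourgain's bounded entropy criterion. In the form we need, it says that if $A_N^\lambda$ converged a.e. for every $f\in L^\infty(\T)$, then for every $\delta>0$ the family $\{A_N^\lambda f\}_N$ would have bounded $\delta$-entropy in $L^2$, uniformly over $\norm{f}_\infty\le1$. The $L^1$ statement then follows from the $L^\infty$ one, since $L^\infty\subset L^1$. So the goal is to build, for each $K$, a single $f$ with $\norm{f}_\infty\le1$ together with $K$ times $N_1<\dots<N_K$ such that the averages $A_{N_i}^\lambda f$ are pairwise $\delta$-separated in $L^2$, with $\delta$ independent of $K$.

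The mechanism is an approximate multiplicative structure. Write $\lambda_n=a^{p(n)}\ee^{\theta_n}$. Since $\theta_n\to0$ geometrically, the frequencies $\lambda_n$ sit at relative distance $|\theta_n|$ from the lacunary-type set $\{a^{p(n)}\}$. Multiplication by $a^{k}$ preserves Haar measure on $\T$, and $a^{p(n)}x$ behaves like a Birkhoff-type orbit under the rank-one semigroup generated by $a$. Correlations between the perturbed and unperturbed orbits are controlled through characters: $e(h\lambda_n x)$ versus $e(ha^{p(n)}x)$ differ by the phase $h f(n)x$.

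The plan is to take $f$ to be a trigonometric-type function: a sum of characters $e(h_j x)$ at sparse frequencies, or the indicator of a union of short arcs. We then arrange that for $n$ in a block $(N_{i-1},N_i]$ the shift $hf(n)x$ is essentially constant modulo $1$ on most of $\T$, while it has changed between consecutive blocks. The geometric decay $0<|\theta_{n+1}|\le\rho|\theta_n|$ is exactly what guarantees two things. First, $f(n)a^{-p(n)}$ changes by a definite multiplicative factor at each step, which gives genuine separation. Second, it never vanishes, so the blocks are nondegenerate.

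Concretely, we will split $\{1,\dots,N_K\}$ into blocks of rapidly growing length so that $A_{N_i}$ is dominated by block $i$. Using the non-vanishing and the geometric decay, we choose frequency scales $H_i$ with $H_i|f(n)|\asymp$ constant for $n$ in block $i$. We set $f=\sum_i \epsilon_i g_i$, where $g_i$ is a normalized function at scale $H_i$ with almost disjoint spectra, or we use a Riesz-product-type construction to keep $\norm{f}_\infty\le1$. We then compute $\langle A_{N_i}f, A_{N_j}f\rangle$ via Parseval. The diagonal terms $\norm{A_{N_i}f}_2^2$ will be bounded below, and the cross terms will be small because of the separation of the phases $h f(n)$ across blocks.

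The main obstacle is controlling the off-diagonal Fourier coincidences $h\lambda_n=h'\lambda_m$. For the pure powers $a^{p(n)}$ these coincidences are abundant, which is precisely why $(a^n)$ is Khintchin. The perturbations $f(n)$ must be shown to break them in a quantitatively uniform way. We would first try a counting argument: the equation $h(a^{p(n)}+f(n))=h'(a^{p(m)}+f(m))$ forces $a^{\min(p(n),p(m))}$ to divide a quantity of size $O(|hf(n)|+|h'f(m)|)$. This forces $n,m$ to be close and $h/h'$ to be essentially a power of $a$, and we expect the geometric decay of $\theta_n$ to make such coincidences sparse. For the special case $a^{p(n)}+c$ we have $\theta_n\sim c\,a^{-p(n)}$, and since $p$ is eventually increasing, $|\theta_{n+1}|/|\theta_n|\le a^{-1}+o(1)$, so the hypothesis holds.
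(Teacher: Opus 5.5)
Your reduction to Bourgain's entropy criterion matches the paper, and so does your use of rapidly growing blocks (the paper takes \(E_Q\subset(Q/2,Q]\) with \(Q_{a+1}>4Q_a\)). There is a genuine gap, though: the separation mechanism you propose does not work, and the idea that actually produces separation is missing.

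First, the phase picture breaks down in the headline case \(f(n)=c\). There the phase \(hf(n)x=hcx\) is the same for every \(n\), so nothing can ``change between consecutive blocks''. Likewise, the prescription \(H_i\abs{f(n)}\asymp1\) gives the same scale \(H_i\) in every block.

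Second, and more fundamentally, the obstacle you set out to overcome points the wrong way. In your Parseval computation, \(\norm{A_Nf}_2^2=N^{-2}\sum_{n,m\le N}\langle T_{\lambda_n}f,T_{\lambda_m}f\rangle\). The diagonal \(n=m\) contributes only \(\norm{f}_2^2/N\). Each off-diagonal term is a sum over coincidences \(h\lambda_n=h'\lambda_m\) with \(h,h'\) in the spectrum of \(f\). Suppose the perturbation made these coincidences rare, as your counting argument aims to show, occurring for all but \(o(N^2)\) pairs. Then \(\norm{A_Nf}_2\to0\), all late averages cluster near \(0\), and no \(\delta\)-separation independent of \(K\) is possible. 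A lower bound for \(\norm{A_{N_i}f}_2\) requires a positive proportion of pairs \(n\ne m\le N_i\) to produce coincidences. So the spectrum of \(f\) must be nearly stable under multiplication by the \(\lambda_m\), in a one-directional, block-by-block way. Your proposal specifies no such \(f\), and it does not say where the diagonal lower bound or the cross-term smallness would come from.

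The paper does this by taking the frequencies to be products of the sequence itself. It sets \(f_d\) to be the \(L^2\)-normalized sum of \(\ee^{2\pi i\omega x}\) over \(\omega=\prod_a\Lambda_{S_a}\), where \(\Lambda_S=\prod_{n\in S}\lambda_n\), \(S_a\subset E_a\), and \(\#S_a=r_a=\lfloor\#E_a/2\rfloor\). Multiplying by \(\lambda_m\) with \(m\in E_i\) raises the \(i\)-th layer by one. Each next-layer frequency is then hit \(r_i+1\asymp Q_i\) times, which gives \(\norm{\Pi_iA_{Q_i}f_d}_2\ge\eta/2\), while \(\Pi_iA_{Q_\ell}f_d=0\) for \(\ell<i\).

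The arithmetic input is rigidity of relations \(\lambda_m^\eps\prod\lambda_n^{\alpha_n}=\prod\lambda_n^{\beta_n}\) with \(0/1\) exponents, and this is where geometric decay is used. Taking logarithms gives \(A\log a+B=0\) with \(A\in\Z\) and \(\abs{B}\le2\sum_{n\ge N_0}\abs{\theta_n}<\log a\), hence \(A=B=0\). Restricting the blocks to one residue class modulo a large \(L\) makes the first non-zero term \(\gamma_{k_0}\theta_{k_0}\) dominate all later ones, so \(B\ne0\) unless the relation is trivial. The hypothesis therefore enters as multiplicative independence of the \(\lambda_n\), not as a varying phase.

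Two smaller points:
\begin{itemize}
\item Bourgain's theorem, in the form the paper uses, bounds the entropy for bounded \(f\) with \(\norm{f}_2\le1\). Your self-imposed normalization \(\norm{f}_\infty\le1\), and the Riesz products it leads to, are unnecessary.
\item Your final check for \(a^{p(n)}+c\) implicitly needs \(p\) non-constant. The eventually constant case, where the sequence itself is eventually constant, has to be handled separately.
\end{itemize}
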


 The theorem includes, for instance, the translated powers \((2^n+1)\), as well
as \((2^{n^2}+1)\), \((a^n+b^n)\), \((a^n-b^n)\), and \((a^{p(n)}-1)\), after
passing to a positive tail when necessary.

The proof combines Bourgain's bounded entropy theorem \cite{Bourgain1988} with an arithmetic
product-frequency construction.  The entropy theorem reduces non-Khintchinness
to producing, for arbitrarily large \(d\), one trigonometric polynomial whose
averaging orbit contains \(d\) points separated in \(L^2(\T)\).  The
product-frequency condition supplies such separation by placing Boolean layers
of the products \(\Lambda_S=\prod_{n\in S}\lambda_n\) into Fourier
frequencies.  We verify this condition either by Bang--Zsigmondy primitive
divisors, for the binomial families, or by a logarithmic separation argument,
for perturbed polynomial powers.

Thus the theorem is not merely a statement about growth.  It shows that the
multiplicative Fourier structure behind the positive sequence \(a^{p(n)}\)
can be destroyed by small integer perturbations.

The paper is organized as follows.  \Cref{sec:bourgain} recalls Bourgain entropy and reduces the problem to separated \(L^2\)-orbits.  \Cref{sec:PFNP} states \(\PFNP\) and proves directly that it implies non-Khintchinness.  \Cref{sec:primitive} records primitive-divisor verifications.  \Cref{sec:log} proves the logarithmic criterion and then proves \cref{thm:perturbed-polynomial-powers}.  

\section{Bourgain entropy and separated orbits}\label{sec:bourgain}

For a subset \(B\) of a Hilbert space \(H\), let \(\Ent_\delta(B;H)\) be the least number of open \(H\)-balls of radius \(\delta\) needed to cover \(B\).  We use the following form of Bourgain's bounded entropy theorem.

\begin{theorem}[Bourgain entropy criterion]\label{thm:bourgain}
Let \((X,\mu)\) be a probability space.  Let \((T_j)_{j\ge1}\) be positive commuting contractions on \(L^1(X)\) whose restrictions to \(L^2(X)\) are isometries.  Assume that \(T_j1=1\) and that
\[
  \frac1J\sum_{j\le J}T_jg\longrightarrow \int_X g\,d\mu
  \quad\text{in }L^1(X),\qquad g\in L^1(X).
\]
Let each \(S_N\) be a finite convex combination of the \(T_j\).  If \(S_Nf\) converges a.e. for every \(f\in L^\infty(X)\), then for every \(\delta>0\) there is a constant \(C(\delta)<\infty\) such that
\[
  \Ent_\delta(\{S_Nf:N\ge1\};L^2(X))\le C(\delta)
\]
for every \(f\in L^\infty(X)\) with \(\norm{f}_2\le1\).
\end{theorem}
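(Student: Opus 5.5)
The plan is to argue by contraposition, following Bourgain's original scheme. Suppose that for some $\delta>0$ there are functions $f_k\in L^\infty(X)$ with $\norm{f_k}_2\le1$ and $\Ent_\delta(\{S_Nf_k:N\ge1\};L^2(X))\to\infty$. From these I will build a single bounded function whose averages $S_Nf$ fail to converge a.e. The argument has three ingredients: a continuity-in-measure statement extracted from a.e. convergence; a Gaussian randomization converting $L^2$-entropy into a large supremum via Sudakov's minoration; and a central-limit step, powered by the mean ergodic hypothesis on the $T_j$, that turns the Gaussian model into actual functions in $L^\infty$.

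\emph{Step 1 (Banach principle).} Assuming $S_Nf$ converges a.e. for all $f\in L^\infty(X)$, the maximal operator $S^*f=\sup_N\abs{S_Nf}$ is finite a.e. on $L^\infty$. A Baire category argument on the unit ball of $L^\infty$ (with the $L^2$-topology) then gives the following: for every $\eps>0$ there are $\eta>0$ and $\Lambda<\infty$ such that $\mu(S^*g>\Lambda)<\eps$ whenever $\norm{g}_\infty\le1$. After truncating and using the $L^2$-isometry property, this should also hold for $g$ with $\norm{g}_2\le\eta$, i.e.\ $S^*$ is continuous in measure at $0$. The commutation of the $T_j$ with each $S_N$ will let me move $T_j$ inside the maximal function.

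\emph{Step 2 (Gaussian randomization and Sudakov).} Fix $f=f_k$ and choose a $\delta$-separated family $S_{N_1}f,\dots,S_{N_d}f$ with $d$ large. Consider
\[
  G_\omega=\frac1{\sqrt J}\sum_{j\le J}g_j(\omega)\,T_jf ,
\]
where the $g_j$ are i.i.d.\ standard Gaussians. For fixed $x$, the process $(S_{N_i}G_\omega(x))_{i\le d}$ is Gaussian with covariance
\[
  \frac1J\sum_{j\le J}T_j(S_{N_i}f\,\overline{S_{N_l}f})(x),
\]
using that the $T_j$ are positive isometric lattice homomorphisms on $L^2$, hence multiplicative. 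By the mean ergodic hypothesis this covariance converges in $L^1$ to $\langle S_{N_i}f,S_{N_l}f\rangle$. Hence, for most $x$, the canonical metric of this process is close to the $L^2$-distance, and the process is $\delta/2$-separated. Sudakov's minoration then gives $\mathbb E\sup_i\abs{S_{N_i}G_\omega(x)}\gtrsim\delta\sqrt{\log d}$, with large probability on a large set of $x$. By Fubini, some $\omega$ yields a function $G$ whose maximal function exceeds $c\delta\sqrt{\log d}$ on a set of measure bounded below uniformly in $d$.

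\emph{Step 3 (making it bounded and concluding).} The function $G$ has $L^2$-norm about $1$ but is not bounded, so I will truncate it at height $M$ and rescale. The tail is controlled in $L^2$ by Gaussian tail estimates, and the truncation error transfers to the maximal function through Step 1 applied to the small error. Dividing by $\sqrt{\log d}$ produces $L^\infty$ functions of small norm whose maximal function exceeds a fixed constant on a set of fixed positive measure. Letting $d\to\infty$ contradicts the continuity in measure from Step 1.

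I expect the main obstacle to be Step 2: making the covariance approximation uniform over the $d$ chosen indices while keeping $J$ fixed independently of $x$. This requires a careful choice of $J$ depending on $d$ and $\delta$, so that the $L^1$-convergence of the ergodic averages holds simultaneously for the $d^2$ products $S_{N_i}f\,\overline{S_{N_l}f}$. It also requires combining this with the passage from $L^2$-smallness to $L^\infty$-boundedness in Step 1.
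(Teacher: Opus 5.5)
The paper gives no proof of this theorem; it is quoted from Bourgain. Your skeleton is the standard one: Gaussian randomization $\frac1{\sqrt J}\sum_{j\le J}g_jT_jf$, covariance controlled by the mean ergodic hypothesis, then Sudakov minoration. Step 2 is essentially routine, because $J$ may be chosen after $f$, $d$ and $N_1,\dots,N_d$.

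\textbf{The gap.} It lies in Steps 1 and 3, i.e.\ in turning the Gaussian information into a contradiction with a.e.\ convergence on $L^\infty$. That is the real difficulty of the $L^\infty$ version.

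\emph{The Banach principle is vacuous here.} Each $S_N$ is a convex combination of positive $T_j$ with $T_j1=1$, so $|S_Ng|\le\|g\|_\infty$ and hence $S^*g\le\|g\|_\infty$ everywhere. The Baire argument on the unit ball $B$ of $L^\infty$ with the $L^2$ metric gives no smallness either, because you cannot rescale inside $B$.

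\emph{The upgrade to $L^2$ fails.} Passing to all $g$ with $\|g\|_2\le\eta$ ``by truncating and using the $L^2$-isometry property'' does not work. The isometry controls each $S_N$ separately, not $\sup_N$. Moreover, an estimate $\mu(S^*g>\Lambda)<\eps$ for all bounded $g$ with $\|g\|_2\le\eta$ is homogeneous. By density and the Banach principle it would give a.e.\ convergence for every $g\in L^2$, which the $L^\infty$ hypothesis does not provide.

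\emph{Step 3 inherits the problem.} It applies this unproven principle to the truncation error $G\mathbf 1_{\{|G|>M\}}$, which is unbounded. And if ``small norm'' there means small $L^\infty$ norm, the claimed conclusion is impossible, since $S^*h\le\|h\|_\infty$.

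You also misplace the main obstacle: it is not the uniformity in Step 2.

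\textbf{The repair.} You need two changes.

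\emph{(i) A restricted continuity principle that uses convergence.} Keep the $L^\infty$ bound, and use a.e.\ convergence rather than mere finiteness of $S^*$. The statement is: for all $\gamma,\eps>0$ there is $\eta>0$ such that $\mu(S^*g>\gamma)<\eps$ whenever $\|g\|_\infty\le1$ and $\|g\|_2\le\eta$.
\begin{itemize}
\item Suppose not. Using $|S_Ng|\le S_N|g|$, take $0\le g_k\le1$ with $\sum_k\|g_k\|_1<\gamma\eps/4$ and $\mu(S^*g_k>\gamma)\ge\eps$.
\item For fixed $K$, $\|\max_{N<K}S_Ng_k\|_1\le K\|g_k\|_1\to0$. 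Hence $\mu(\sup_{N\ge K}S_Ng_k>\gamma)\ge\eps/2$ for $k\ge k(K)$.
\item Put $F=\sup_kg_k\in L^\infty$. Positivity gives $\limsup_NS_NF\ge\gamma$ on a set of measure $\ge\eps/2$.
\item By hypothesis $S_NF$ converges a.e., so $\int\lim_NS_NF\ge\gamma\eps/2$. But Fatou gives $\int\lim_NS_NF\le\int F<\gamma\eps/4$, a contradiction.
\end{itemize}

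\emph{(ii) Truncate at a height growing with $d$.} In Step 3, truncate at $M=4\sqrt{\log d}$ rather than at a fixed height. Three ingredients make the discarded part $e=G\mathbf 1_{\{|G|>M\}}$ negligible:
\begin{itemize}
\item the crude bound $\|\max_{i\le d}|S_{N_i}e|\|_2\le\sqrt d\,\|e\|_2$;
\item Gaussian tails on the set $\{\frac1J\sum_jT_j|f|^2\le2\}$;
\item a large $J$, which makes the contribution of the complementary set small by the mean ergodic hypothesis applied to $|f|^2$.
\end{itemize}
Then $h=G\mathbf 1_{\{|G|\le M\}}/M$ satisfies $\|h\|_\infty\le1$ and $\|h\|_2=O(1/\sqrt{\log d})$. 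Also $\max_i|S_{N_i}h|\gtrsim\delta$ on a set of measure bounded below. As $d\to\infty$ this contradicts the principle in (i).
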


On \(\T\), let
\[
  T_jf(x)=f(jx),\qquad j\ge1.
\]
The map \(x\mapsto jx\) preserves Haar measure.  Hence \(T_j\) is positive, \(T_j1=1\), and \(T_j\) is an \(L^2\)-isometry.  The operators commute because \(T_jT_k=T_{jk}\).  If \(e_k(x)=\ee^{2\pi i kx}\) and \(k\ne0\), then
\[
  \left\|\frac1J\sum_{j\le J}T_je_k\right\|_2^2
  =\frac1{J^2}\sum_{j,\ell\le J}\int_\T \ee^{2\pi i k(j-\ell)x}\,dm(x)
  =\frac1J.
\]
The density of trigonometric polynomials in \(L^1(\T)\) gives the averaging hypothesis in \cref{thm:bourgain}.  Therefore
\[
  A_N^\lambda=\frac1N\sum_{n\le N}T_{\lambda_n}
\]
is an admissible sequence of convex combinations.

\begin{proposition}[entropy witness]\label{prop:entropy-witness}
Let \(\lambda=(\lambda_n)\) be a sequence of positive integers.  Suppose that there is \(\delta_0>0\) such that for every \(d\ge1\) we can find a trigonometric polynomial \(f_d\), \(\norm{f_d}_2=1\), and integers
\[
  1\le N_1<\cdots<N_d
\]
for which
\[
  \norm{A_{N_i}^\lambda f_d-A_{N_j}^\lambda f_d}_2\ge\delta_0
  \qquad(i\ne j).
\]
Then \(\lambda\) is neither \(L^\infty\)-Khintchin nor \(L^1\)-Khintchin.
\end{proposition}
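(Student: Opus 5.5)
We argue by contradiction, using \cref{thm:bourgain} with the operators \(T_j f(x)=f(jx)\) on \(\T\) and the convex combinations \(S_N=A_N^\lambda\); both were checked to be admissible above. First we note that the \(L^1\)-Khintchin property implies the \(L^\infty\)-Khintchin property, because \(L^\infty(\T)\subset L^1(\T)\). It is therefore enough to show that \(\lambda\) is not \(L^\infty\)-Khintchin. Suppose, on the contrary, that \(A_N^\lambda f\) converges a.e. for every \(f\in L^\infty(\T)\). In particular \(A_N^\lambda f\) converges a.e. for every bounded \(f\), so \cref{thm:bourgain} applies. Fix \(\delta=\delta_0/3\). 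The theorem then gives a constant \(C(\delta)<\infty\) such that
\[
  \Ent_\delta(\{A_N^\lambda f:N\ge1\};L^2(\T))\le C(\delta)
\]
for every \(f\in L^\infty\) with \(\norm{f}_2\le1\).

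Next we choose \(d>C(\delta)\) and apply the hypothesis to obtain \(f_d\) and \(N_1<\dots<N_d\). Since \(f_d\) is a trigonometric polynomial, it lies in \(L^\infty(\T)\), and \(\norm{f_d}_2=1\). So the entropy bound applies to \(f_d\), and the orbit \(\{A_N^\lambda f_d\}\) is covered by at most \(C(\delta)<d\) open \(L^2\)-balls of radius \(\delta\). By pigeonhole, two distinct points \(A_{N_i}^\lambda f_d\) and \(A_{N_j}^\lambda f_d\) lie in the same ball, say with centre \(g\). The triangle inequality then gives
\[
  \norm{A_{N_i}^\lambda f_d-A_{N_j}^\lambda f_d}_2<2\delta<\delta_0,
\]
which contradicts the assumed separation. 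Hence \(\lambda\) is not \(L^\infty\)-Khintchin, and therefore not \(L^1\)-Khintchin either.

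There is no real obstacle beyond bookkeeping. The points to get right are: the constant \(C(\delta)\) must be uniform over all \(f\) with \(\norm{f}_2\le1\), which is exactly how \cref{thm:bourgain} is stated; the \(N_i\) must be distinct, so the pigeonhole argument yields two different points of the orbit; and the direction of the implication between the \(L^1\) and \(L^\infty\) properties must be stated correctly. No limit identification is needed, since the argument only uses that a.e. convergence holds.
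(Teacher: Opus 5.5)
Your proof is correct and follows essentially the same route as the paper. You apply \cref{thm:bourgain} with $S_N=A_N^\lambda$ and $\delta=\delta_0/3$, take $d>C(\delta)$, and observe that an open ball of radius $\delta$ contains at most one of the $\delta_0$-separated points $A_{N_i}^\lambda f_d$; you then pass from the $L^\infty$ failure to the $L^1$ failure via $L^\infty(\T)\subset L^1(\T)$.
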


\begin{proof}
Assume that \(A_N^\lambda f\) converges a.e. for every bounded \(f\).  Apply \cref{thm:bourgain} with \(S_N=A_N^\lambda\) and \(\delta=\delta_0/3\).  We get a number \(C(\delta)\) that covers every bounded \(L^2\)-unit orbit.  Choose \(d>C(\delta)\).  The orbit of \(f_d\) contains \(d\) points at pairwise distance at least \(\delta_0\), so one open ball of radius \(\delta\) contains at most one of them.  Its covering number is at least \(d\), contradiction.  Thus bounded convergence fails for some trigonometric polynomial.  Since bounded functions are in \(L^1(\T)\), \(L^1\)-Khintchinness fails as well.
\end{proof}

We will discard finitely many initial labels several times.  This does not change the conclusion.

\begin{lemma}[finite-prefix invariance]\label{lem:finite-prefix}
Let \(h\ge0\) and set \(\lambda_n^{[h]}=\lambda_{n+h}\).  Then \(\lambda\) is \(L^p\)-Khintchin if and only if \(\lambda^{[h]}\) is \(L^p\)-Khintchin, for \(p=1\) and for \(p=\infty\).
\end{lemma}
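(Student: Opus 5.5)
The plan is to compare the two averages directly and show that their difference tends to zero in the strong sense needed for almost-everywhere convergence. Write \(\lambda^{[h]}\) for the shifted sequence and \(A_N^{[h]}\) for its averages. For \(N\ge1\), reindexing \(m=n+h\) gives
\[
  A_N^{[h]}f(x)=\frac1N\sum_{n\le N}f(\lambda_{n+h}x)
  =\frac{N+h}{N}\,A_{N+h}^\lambda f(x)-\frac1N\sum_{m\le h}f(\lambda_m x).
\]
Both directions of the equivalence will be read off from this single identity, handling \(p=\infty\) and \(p=1\) in the same way.

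Fix \(f\in L^p(\T)\), and note that \(L^\infty\subset L^1\). Each function \(x\mapsto f(\lambda_m x)\) lies in \(L^1\) because \(x\mapsto\lambda_m x\) preserves \(m\). Hence each is finite almost everywhere, and so is the finite sum \(\sum_{m\le h}f(\lambda_m x)\). Outside a null set, this fixed finite sum divided by \(N\) tends to \(0\), while \((N+h)/N\to1\).

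Suppose first that \(\lambda\) is \(L^p\)-Khintchin. Then \(A_{N+h}^\lambda f(x)\to\int f\,dm\) almost everywhere, and the identity gives \(A_N^{[h]}f(x)\to\int f\,dm\) on the intersection of the two full-measure sets. Conversely, suppose \(\lambda^{[h]}\) is \(L^p\)-Khintchin. Solve the identity for \(A_{N+h}^\lambda f\):
\[
  A_{N+h}^\lambda f=\frac{N}{N+h}\Bigl(A_N^{[h]}f+\frac1N\sum_{m\le h}f(\lambda_m\,\cdot)\Bigr).
\]
This gives convergence of \(A_M^\lambda f\) along \(M\ge h+1\), which is all of the tail and hence suffices.

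No step is a real obstacle. The only point that needs care is that the finitely many prefix terms are finite almost everywhere for \(f\in L^1\), not merely for bounded \(f\); the measure-preserving property of \(x\mapsto\lambda_m x\) settles this.
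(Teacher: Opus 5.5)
Your proof is correct and follows the same route as the paper. In both, reindexing shows the two averages differ by the \(h\) prefix terms divided by \(N\) and by a normalization factor \((N+h)/N\to1\), and for \(f\in L^1\) the prefix terms are finite almost everywhere because \(x\mapsto\lambda_m x\) preserves Haar measure; you simply write out the identity explicitly and run it in both directions, which the paper leaves implicit.
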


\begin{proof}
For bounded \(f\), the first \(h\) terms contribute \(O_f(N^{-1})\) pointwise to the average.  The denominator change from \(N\) to \(N+h\) tends to one.  For \(f\in L^1\), each fixed pullback \(f(\lambda_jx)\) is finite for a.e. \(x\), because \(x\mapsto \lambda_jx\) preserves null sets.  The same argument applies.
\end{proof}

\section{The product-frequency criterion}\label{sec:PFNP}

We now state the only auxiliary arithmetic condition used in the entropy construction.  For a finite set \(S\subset\N\), write $  \Lambda_S=\prod_{n\in S}\lambda_n,$
with \(\Lambda_\varnothing=1\).

\begin{definition}[product-frequency condition]\label{def:PFNP}
A sequence \(\lambda=(\lambda_n)\) of positive integers has \(\PFNP\) if, after deleting finitely many initial terms, there are \(\eta>0\), an unbounded set \(\cQ\subset\N\), and sets
\[
  E_Q\subset (Q/2,Q]\cap\N,\qquad \#E_Q\ge \eta Q\quad(Q\in\cQ),
\]
with the following property.  Whenever \(Q_1<\cdots<Q_d\) are in \(\cQ\) and \(Q_{a+1}>4Q_a\), put \(E_a=E_{Q_a}\).  Then:

First, products on equal layers are unique.  If \(S_a,T_a\subset E_a\), \(\#S_a=\#T_a\), and
\[
  \prod_{a=1}^d \Lambda_{S_a}=\prod_{a=1}^d \Lambda_{T_a},
\]
then \(S_a=T_a\) for every \(a\).

Second, a prefix label can raise one layer only in the forced way.  If \(i\le d\), \(m\le Q_i\), \(S_a,T_a\subset E_a\),
\[
  \#T_i=\#S_i+1,
  \qquad
  \#T_a=\#S_a\quad(a\ne i),
\]
and
\[
  \lambda_m\prod_{a=1}^d\Lambda_{S_a}
  =\Lambda_{T_i}\prod_{a\ne i}\Lambda_{T_a},
\]
then
\[
  m\in E_i\setminus S_i,
  \qquad
  T_i=S_i\cup\{m\},
  \qquad
  T_a=S_a\quad(a\ne i).
\]
\end{definition}

The second clause says that the only way to enter the next layer in the \(i\)-th block is to add the same index \(m\).

\begin{theorem}[product-frequency entropy criterion]\label{thm:PFNP-obstruction}
If \(\lambda\) has \(\PFNP\), then \(\lambda\) is neither \(L^\infty\)-Khintchin nor \(L^1\)-Khintchin.
\end{theorem}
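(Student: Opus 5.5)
The plan is to apply \cref{prop:entropy-witness} with $N_j=Q_j$. The test function is a flat, normalized sum of characters whose frequencies are Boolean-layer products, and the separation is detected by pairing with a second family of polynomials in which one layer is raised by one.

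\textbf{Setup.} By \cref{lem:finite-prefix} we may delete the finitely many initial terms allowed in \cref{def:PFNP}. Fix $d$ and choose $Q_1<\cdots<Q_d$ in $\cQ$ with $Q_{a+1}>4Q_a$. Put $E_a=E_{Q_a}$ and $M_a=\#E_a\ge\eta Q_a$, and pick layer sizes $s_a=\lfloor M_a/2\rfloor$. Define
\[
  f_d=\binom{\mathbf M}{\mathbf s}^{-1/2}\sum_{\substack{S_a\subset E_a\\ \#S_a=s_a}} e_{\prod_a\Lambda_{S_a}},
  \qquad
  \binom{\mathbf M}{\mathbf s}=\prod_a\binom{M_a}{s_a}.
\]
For each $i$, let $h_i$ be the analogous normalized sum over tuples with $\#T_i=s_i+1$ and $\#T_a=s_a$ for $a\ne i$. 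By the first clause of \PFNP\ (uniqueness on equal layers), the frequencies in each sum are pairwise distinct. Hence $\norm{f_d}_2=\norm{h_i}_2=1$.

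\textbf{Inner products.} Since $T_{\lambda_m}e_k=e_{\lambda_m k}$, we have
\[
  \langle A_N^\lambda f_d,h_i\rangle=\frac1N\sum_{m\le N}\frac{\#\{(S,T):\lambda_m\prod_a\Lambda_{S_a}=\prod_a\Lambda_{T_a}\}}{\sqrt{\binom{\mathbf M}{\mathbf s}\binom{\mathbf M}{\mathbf s+e_i}}}.
\]
Here $S$ ranges over the layer-$\mathbf s$ tuples and $T$ over the tuples with layer $i$ raised. For $m\le Q_i$ the second clause of \PFNP\ applies. It forces $m\in E_i\setminus S_i$, $T_i=S_i\cup\{m\}$ and $T_a=S_a$ for $a\ne i$. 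Two cases follow.

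\emph{Case $N=Q_k$ with $k<i$.} Every $m\le Q_k$ satisfies $m\le Q_i$, but $m\notin E_i\subset(Q_i/2,Q_i]$ because $Q_k<Q_i/4$. So there are no coincidences, and $\langle A_{Q_k}^\lambda f_d,h_i\rangle=0$.

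\emph{Case $N=Q_i$.} Summing over $m\le Q_i$, the coincidences are exactly the pairs $(S,m)$ with $m\in E_i\setminus S_i$. There are $\binom{\mathbf M}{\mathbf s}(M_i-s_i)$ of them. Using $\binom{M_i}{s_i+1}=\binom{M_i}{s_i}(M_i-s_i)/(s_i+1)$, this gives
\[
  \langle A_{Q_i}^\lambda f_d,h_i\rangle=\frac{\sqrt{(M_i-s_i)(s_i+1)}}{Q_i}\ge\frac{M_i}{2Q_i}\ge\frac\eta2.
\]
The cancellation here is routine and I expect it to check out.

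\textbf{Separation.} For $k<j$, Cauchy--Schwarz with $\norm{h_j}_2=1$ gives
\[
  \norm{A_{Q_j}^\lambda f_d-A_{Q_k}^\lambda f_d}_2\ge\abs{\langle A_{Q_j}^\lambda f_d-A_{Q_k}^\lambda f_d,h_j\rangle}\ge\frac\eta2.
\]
So the hypothesis of \cref{prop:entropy-witness} holds with $\delta_0=\eta/2$ and $N_j=Q_j$, and that proposition yields the theorem.

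\textbf{Main obstacle.} The delicate point is that the second clause of \PFNP\ only controls prefixes $m\le Q_i$. This is why the comparison must be triangular: we always pair with the $h_j$ of the larger index $j$, so that every label $m$ that occurs is at most $Q_j$. One should also check that the count at $N=Q_i$ is exact. Labels $m\le Q_i$ outside $E_i$ contribute nothing by the clause, so the count is right. The normalization must use the true layer sizes, which is why $s_i$ is taken near $M_i/2$.
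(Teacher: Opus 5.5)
Your proof is correct and follows essentially the same route as the paper. Both use the same flat test function on the balanced layers $s_a=\lfloor M_a/2\rfloor$. Both use the second clause of $\PFNP$ to force exactly the raised-layer coincidences at $N=Q_i$ and to exclude them at $N=Q_\ell$ for $\ell<i$. The only difference is cosmetic: you pair with the unit vector $h_i$, while the paper takes the norm of the orthogonal projection $\Pi_i$ onto the span of the raised-layer frequencies. Since $\Pi_i A_{Q_i}^\lambda f_d$ has constant coefficients on those frequencies, it is a multiple of $h_i$, so the two computations coincide.
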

 \begin{proof}
By Lemma~\ref{lem:finite-prefix}, we may pass to a tail of the sequence
and assume that Definition~\ref{def:PFNP} holds. Fix \(d\geq 1\), and
choose \(Q_1<\cdots<Q_d\) in \(\mathcal Q\) such that \(Q_{a+1}>4Q_a\).
For \(a=1,\ldots,d\), write \(E_a=E_{Q_a}\), \(M_a=\#E_a\), and
\(r_a=\lfloor M_a/2\rfloor\).

Define
\[
  \Omega_0=\left\{\prod_{a=1}^d \Lambda_{S_a}:
  S_a\subset E_a,\ \#S_a=r_a\right\}.
\]
By the first clause of \(\operatorname{PFNP}\), each element of
\(\Omega_0\) has a unique representation of this form. Hence
\(f_d(x):=\#\Omega_0^{-1/2}\sum_{\omega\in\Omega_0} e^{2\pi i\omega x}\)
satisfies \(\|f_d\|_2=1\).

For each \(i=1,\ldots,d\), define
\[
  \Omega_i=\left\{\Lambda_{T_i}\prod_{a\ne i}\Lambda_{T_a}:
  T_i\subset E_i,\ \#T_i=r_i+1,\ 
  T_a\subset E_a,\ \#T_a=r_a\ \text{for }a\ne i\right\}.
\]
Let \(\Pi_i\) be the orthogonal projection in \(L^2(\mathbb T)\) onto the
closed span of \(\{e^{2\pi i\omega x}:\omega\in\Omega_i\}\). Again, the
first clause of \(\operatorname{PFNP}\) gives unique representations for
the elements of \(\Omega_i\).

We now compute the projection of \(A_{Q_i}^{\lambda}f_d\). Fix
\(\omega=\Lambda_{T_i}\prod_{a\ne i}\Lambda_{T_a}\in\Omega_i\). The
Fourier coefficient of \(\Pi_i A_{Q_i}^{\lambda}f_d\) at frequency
\(\omega\) is
\[
  \frac{1}{Q_i\#\Omega_0^{1/2}}
  \#\left\{(m,S_1,\ldots,S_d):
  m\leq Q_i,\ \#S_a=r_a,\ 
  \lambda_m\prod_{a=1}^d\Lambda_{S_a}=\omega
  \right\}.
\]
By the second clause of \(\operatorname{PFNP}\), every solution is forced:
\(m\in T_i\), \(S_i=T_i\setminus\{m\}\), and \(S_a=T_a\) for \(a\ne i\).
Conversely, each \(m\in T_i\) gives such a solution. Thus the number of
solutions is \(r_i+1\), and every target frequency has coefficient
\((r_i+1)/(Q_i\#\Omega_0^{1/2})\). Therefore
\[
\begin{aligned}
  \|\Pi_iA_{Q_i}^\lambda f_d\|_2^2
  &= \#\Omega_i\frac{(r_i+1)^2}{Q_i^2\#\Omega_0}  \\
  &= \frac{(r_i+1)^2}{Q_i^2}
     \frac{\binom{M_i}{r_i+1}}{\binom{M_i}{r_i}} \\
  &= \frac{(r_i+1)(M_i-r_i)}{Q_i^2}.
\end{aligned}
\]
Since \(r_i=\lfloor M_i/2\rfloor\), we have
\((r_i+1)(M_i-r_i)\ge M_i^2/4\). Since \(M_i\ge\eta Q_i\), it follows
that \(\|\Pi_iA_{Q_i}^\lambda f_d\|_2\ge \eta/2\).

Now let \(\ell<i\). If a frequency in \(\Omega_i\) appeared in
\(A_{Q_\ell}^\lambda f_d\), then the same product identity would hold
with \(m\le Q_\ell\). The second clause of \(\operatorname{PFNP}\) would
force \(m\in E_i\subset(Q_i/2,Q_i]\). But
\(m\le Q_\ell\le Q_{i-1}<Q_i/4\), a contradiction. Hence
\(\Pi_iA_{Q_\ell}^\lambda f_d=0\) for every \(\ell<i\).

For \(\ell<i\), we have
\[
  \|A_{Q_i}^\lambda f_d-A_{Q_\ell}^\lambda f_d\|_2
  \ge \|\Pi_i(A_{Q_i}^\lambda f_d-A_{Q_\ell}^\lambda f_d)\|_2
  =\|\Pi_iA_{Q_i}^\lambda f_d\|_2
  \ge \eta/2.
\]
Thus the orbit of \(f_d\) contains \(d\) points separated by \(\eta/2\).
Since \(d\) is arbitrary, Proposition~\ref{prop:entropy-witness} applies.
\end{proof}

\section{Primitive-divisor verifications}\label{sec:primitive}

We record a short arithmetic route to \(\PFNP\).  It is useful for the translated-power example \((2^n+1)\) and for binomial families.

\begin{proposition}[primitive divisors imply the product-frequency condition]\label{prop:primitive-PFNP}
Assume that, after deleting finitely many initial terms, each \(\lambda_n\) has a prime divisor \(\pi_n\) such that
\[
  \pi_n\mid \lambda_n,
  \qquad
  \pi_n\nmid \lambda_m\quad(m<n).
\]
Then \(\lambda\) has \(\PFNP\).  Hence \(\lambda\) is neither \(L^\infty\)-Khintchin nor \(L^1\)-Khintchin.
\end{proposition}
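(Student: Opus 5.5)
The plan is to verify \cref{def:PFNP} directly with the largest possible layer sets, and then invoke \cref{thm:PFNP-obstruction}. After deleting the finitely many initial terms (the primitive-divisor hypothesis clearly survives reindexing a tail), take \(\cQ=\{Q\in\N:Q\ge 4\}\), \(E_Q=(Q/2,Q]\cap\N\), and \(\eta=1/3\), so that \(\#E_Q\ge Q/3\). If \(Q_{a+1}>4Q_a\), the intervals \((Q_a/2,Q_a]\) are pairwise disjoint. Hence a tuple \((S_1,\dots,S_d)\) with \(S_a\subset E_a\) is determined by its union \(S=\bigcup_a S_a\), via \(S_a=S\cap E_a\), and \(\prod_a\Lambda_{S_a}=\Lambda_S\).

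The key arithmetic step is a descent lemma. If \(U\) and \(V\) are finite multisets of indices with \(\prod_{n\in U}\lambda_n=\prod_{n\in V}\lambda_n\), then \(U=V\) as multisets. To prove it, let \(n\) be the largest index occurring in \(U\cup V\), and suppose \(n\) occurs in \(U\) but not in \(V\). Then \(\pi_n\) divides the left side, so it divides some \(\lambda_m\) with \(m\in V\). Every such \(m\) satisfies \(m<n\), which contradicts \(\pi_n\nmid\lambda_m\). So \(n\) occurs on both sides. We cancel one factor \(\lambda_n\), which is legitimate since it is a positive integer, and induct on \(\#U+\#V\). The essential point is to always work from the top index down. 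A primitive prime of \(\lambda_n\) may well divide later terms \(\lambda_m\), \(m>n\), and we never need to exclude that.

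Both clauses now follow. For the first clause, apply the lemma with \(U=S=\bigcup S_a\) and \(V=T=\bigcup T_a\) to get \(S=T\), hence \(S_a=T_a\) by intersecting with \(E_a\). The cardinality hypothesis is not even needed here. For the second clause, apply the lemma to the multiset \(U=\{m\}+S\) and the set \(V=T\). Since \(T\) has no repeated elements, \(m\notin S\) and \(T=S\cup\{m\}\). Intersecting with the layers gives \(T_a=S_a\) for every layer not containing \(m\). The cardinality conditions \(\#T_i=\#S_i+1\) and \(\#T_a=\#S_a\) for \(a\neq i\) then force \(m\in E_i\), \(T_i=S_i\cup\{m\}\), and \(m\notin S_i\). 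This uses only that \(m\) is a valid index of the tail, and the condition \(m\le Q_i\) is automatically compatible.

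Thus \(\lambda\) has \(\PFNP\), and \cref{thm:PFNP-obstruction} (together with \cref{lem:finite-prefix} for the deleted prefix) gives the non-Khintchin conclusion. The only real care point is the descent: one must run it from the largest index rather than trying to use \(\pi_n\) symmetrically, because primitivity is one-sided.
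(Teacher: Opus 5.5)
Your proof is correct and follows the paper's argument essentially step for step. Both arguments run a top-down descent on the primitive prime of the largest index to obtain unique factorization of products of the $\lambda_n$ with multiplicities, take $E_Q=(Q/2,Q]\cap\N$ with pairwise disjoint blocks, and derive both clauses by intersecting with each $E_a$ and using the cardinality conditions. The only differences are cosmetic: you take $\mathcal Q=\{Q\ge4\}$ and $\eta=1/3$ where the paper takes $\mathcal Q=\N$, and you induct on multisets where the paper cancels at the largest index where the exponents differ.
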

\begin{proof}
By Lemma~\ref{lem:finite-prefix}, assume the primitive-divisor condition holds for all $n$. Let $\prod_{n\in F}\lambda_n^{\alpha_n} = \prod_{n\in F}\lambda_n^{\beta_n}$ with $\alpha_n, \beta_n \in \mathbb{Z}_{\ge 0}$. If $\alpha \neq \beta$, let $N = \max\{n \in F : \alpha_n \neq \beta_n\}$. Assuming $\alpha_N > \beta_N$ without loss of generality, canceling common factors yields
\[
\lambda_N^{\alpha_N - \beta_N} \prod_{n < N} \lambda_n^{\alpha_n'} = \prod_{n < N} \lambda_n^{\beta_n'}
\]
where $\alpha_n', \beta_n' \in \mathbb{Z}_{\ge 0}$. Since $\pi_N \mid \lambda_N$, it follows that $\pi_N \mid \prod_{n < N} \lambda_n^{\beta_n'}$. Since $\pi_N$ is prime, there exists $m < N$ such that $\pi_N \mid \lambda_m$, which contradicts the assumption that $\pi_N \nmid \lambda_m$ for all $m < N$. Thus $\alpha_n = \beta_n$ for all $n \in F$.

Set $\mathcal{Q} = \mathbb{N}$ and $E_Q = (Q/2,Q]\cap\mathbb{N}$. For $Q_1<\cdots<Q_d$ with $Q_{a+1}>4Q_a$, the blocks $E_a = E_{Q_a}$ are pairwise disjoint.
\begin{enumerate}
\item $\prod_{a=1}^d \Lambda_{S_a}=\prod_{a=1}^d \Lambda_{T_a} \implies \bigsqcup_{a=1}^d S_a = \bigsqcup_{a=1}^d T_a$. Intersecting both sides with $E_a$ yields $S_a = T_a$ for all $1 \le a \le d$.

\item $\lambda_m\prod_{a=1}^d\Lambda_{S_a} = \Lambda_{T_i}\prod_{a\ne i}\Lambda_{T_a} \implies \{m\}\sqcup \bigsqcup_{a=1}^d S_a = T_i\sqcup\bigsqcup_{a\ne i}T_a$.

For any $a \ne i$, intersecting both sides with $E_a$ yields $(\{m\} \cap E_a) \sqcup S_a = T_a$. Since $\#S_a = \#T_a$, it forces $\{m\} \cap E_a = \varnothing$, hence $m \notin E_a$ and $S_a = T_a$.

For $a = i$, intersecting both sides with $E_i$ yields $(\{m\} \cap E_i) \sqcup S_i = T_i$. Since $\#T_i = \#S_i + 1$, it forces $\{m\} \cap E_i = \{m\}$, hence $m \in E_i$. It follows that $m \notin S_i$ and $T_i = S_i \cup \{m\}$.
\end{enumerate}
Thus $\lambda$ satisfies $\PFNP$, and Theorem~\ref{thm:PFNP-obstruction} completes the proof.
\end{proof}

\begin{corollary}[coprime labels]\label{cor:coprime}
If \(\lambda_n>1\) and \(\gcd(\lambda_m,\lambda_n)=1\) for \(m\ne n\), then \(\lambda\) is neither \(L^\infty\)-Khintchin nor \(L^1\)-Khintchin.  In particular, the prime labels \(\lambda_n=p_n\) are non-Khintchin in this circle-endomorphism problem.
\end{corollary}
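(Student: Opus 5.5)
The corollary follows directly from \cref{prop:primitive-PFNP} once we produce a primitive prime divisor for every label. For each \(n\), since \(\lambda_n>1\), we choose any prime \(\pi_n\mid\lambda_n\). If some \(m<n\) had \(\pi_n\mid\lambda_m\), then \(\pi_n\) would divide \(\gcd(\lambda_m,\lambda_n)=1\), which is impossible. Hence \(\pi_n\nmid\lambda_m\) for all \(m<n\) (indeed for all \(m\ne n\)). This is exactly the hypothesis of \cref{prop:primitive-PFNP}, here without deleting any terms. That proposition then gives \(\PFNP\), and through \cref{thm:PFNP-obstruction} it gives the failure of both the \(L^\infty\)- and the \(L^1\)-Khintchin property.

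For the special case \(\lambda_n=p_n\), the \(n\)-th prime, the labels are distinct primes. So \(\lambda_n>1\) and \(\gcd(p_m,p_n)=1\) for \(m\ne n\), and the general statement applies; one may simply take \(\pi_n=p_n\).

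There is no real obstacle. The only point to check is that the hypothesis \(\lambda_n>1\) is what guarantees that a prime divisor exists at all. Without it, a label equal to \(1\) would violate the primitive-divisor hypothesis of \cref{prop:primitive-PFNP}, and a repeated label \(1\) would also break the uniqueness of products. The pairwise coprimality is used only for the implication ``\(\pi_n\mid\lambda_m\Rightarrow\pi_n\mid\gcd(\lambda_m,\lambda_n)\)''.
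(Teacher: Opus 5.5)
Your proposal is correct and is the argument the paper intends. The paper gives no separate proof and treats the corollary as an immediate consequence of \cref{prop:primitive-PFNP}: any prime \(\pi_n\mid\lambda_n\), which exists because \(\lambda_n>1\), is primitive by pairwise coprimality, so no terms need to be deleted, and the prime-label case follows with \(\pi_n=p_n\).
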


We use the following classical theorem of Bang and Zsigmondy
\cite{Bang1886,Zsigmondy1892}.

\begin{theorem}[Bang--Zsigmondy]\label{thm:bang-zsigmondy}
Let \(a>b\ge1\), \(\gcd(a,b)=1\), and \(r>1\).  Then \(a^r-b^r\) has a prime divisor that divides no \(a^s-b^s\) with \(1\le s<r\), except for the standard cases \((a,b,r)=(2,1,6)\) and \(r=2\) with \(a+b\) a power of \(2\).
\end{theorem}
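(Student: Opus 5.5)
We follow the classical cyclotomic route. Put \(\Phi_r(a,b)=\prod_{\zeta}(a-\zeta b)\), the product over the primitive \(r\)-th roots of unity \(\zeta\). This is the homogenized cyclotomic polynomial, and it has integer values. From \(x^r-1=\prod_{s\mid r}\Phi_s(x)\) we get
\[
  a^r-b^r=\prod_{s\mid r}\Phi_s(a,b).
\]
It therefore suffices to show that \(\Phi_r(a,b)\) has a prime factor that is \emph{primitive}, meaning it divides no \(a^s-b^s\) with \(s<r\). For a prime \(p\nmid ab\), let \(e_p\) be the multiplicative order of \(ab^{-1}\) modulo \(p\); such a \(p\) divides \(a^s-b^s\) iff \(e_p\mid s\). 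A prime dividing \(ab\) cannot divide \(a^r-b^r\), because \(\gcd(a,b)=1\). So \(p\) is primitive for \(a^r-b^r\) exactly when \(e_p=r\).

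\textbf{Step 1: non-primitive prime factors of \(\Phi_r(a,b)\).} Let \(p\mid\Phi_r(a,b)\) with \(e=e_p<r\). Then \(e\mid r\), and working with \(x=ab^{-1}\) in \(\mathbb F_p\), the element \(x\) is a multiple root of \(X^r-1\) there. This forces \(p\mid r\). Writing \(r=e\,p^k\) and using \(p\equiv1\pmod e\), one shows that \(p\) is the largest prime divisor of \(r\).

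Next we show \(v_p(\Phi_r(a,b))=1\), except when \(p=2\). The tool is the lifting-the-exponent lemma. For odd \(p\) it gives \(v_p(a^{r}-b^{r})=v_p(a^{e}-b^{e})+v_p(r/e)\). Comparing this with the factorization \(\prod_{s\mid r}\Phi_s(a,b)\) isolates exactly one factor of \(p\) in \(\Phi_r(a,b)\). When \(p=2\) we have \(r=2^k\), and the same computation leaves room for higher powers only when \(r=2\).

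\textbf{Step 2: a size bound.} For \(r\ge3\) we bound
\[
  |\Phi_r(a,b)|\ge \prod_{\zeta}|a-\zeta b|\ge (a-b)^{\varphi(r)}.
\]
For \(a-b\ge2\) we also use the sharper pointwise bound \(|a-\zeta b|>a-b\) for \(\zeta\ne1\). The goal is to show \(|\Phi_r(a,b)|>P(r)\), where \(P(r)\) is the largest prime factor of \(r\). Once this holds, Step 1 shows that \(\Phi_r(a,b)\) cannot be composed only of non-primitive primes, since those contribute at most the single factor \(P(r)\). Hence a primitive prime exists.

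\textbf{Main obstacle.} The hard part is the regime \(a-b=1\), where \((a-b)^{\varphi(r)}=1\) gives nothing. Here we use instead \(|a-\zeta b|\ge |a-b\cos(2\pi k/r)|\), together with \(|a-\zeta b|^2=a^2+b^2-2ab\cos(2\pi k/r)\). This yields \(|a-\zeta b|>1\) with a quantitative margin for every primitive \(\zeta\). The product of these margins exceeds \(P(r)\le r\) except for finitely many small \((a,b,r)\). We check those by hand, which is where \((2,1,6)\) appears: there \(\Phi_6(2,1)=3=P(6)\). The case \(r=2\) is treated directly. Here \(\Phi_2(a,b)=a+b\), and the only prime that can be non-primitive is \(2\), since \(2\) is non-primitive exactly when \(a-b\) is even. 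So a primitive prime exists unless \(a+b\) is a power of \(2\).
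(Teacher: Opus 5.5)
The paper does not prove this statement. It quotes it from Bang and Zsigmondy and uses it only as a black box in \cref{cor:binomial}, so your outline has to stand on its own.

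Its architecture is the standard cyclotomic one. Step 1 and the case \(a-b\ge2\) are sound up to routine details:
\begin{enumerate}
\item Before writing \(r=ep^k\) you must show that \(r/e\) is a power of \(p\). With \(r=p^jm\), \(p\nmid m\), one has \(\Phi_r(X)\equiv\Phi_m(X)^{\varphi(p^j)}\pmod p\). Since \(X^m-1\) is separable mod \(p\), \(x\) has order exactly \(m\), i.e.\ \(e=m\).
\item The LTE comparison must be made at consecutive levels \(ep^{j-1}\) and \(ep^j\). This uses the fact that among the \(\Phi_s(a,b)\) with \(s\mid r\), only those with \(s=ep^j\) are divisible by \(p\).
\item For \(a-b\ge2\), the bounds \(\varphi(r)\ge P(r)-1\) and \(2^{P-1}\ge P\) finish the argument.
\end{enumerate}

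The genuine gap is the regime \(a-b=1\), which you correctly flag as the main obstacle but then only assert. There \(|a-\zeta b|^2=1+4ab\sin^2(\pi k/r)\). For primitive \(\zeta\) close to \(1\) the margin is therefore only \(1+O(ab/r^2)\). Knowing that every factor exceeds \(1\) ``with a margin'' gives no lower bound on the product that beats \(P(r)\), which can be as large as \(r\).

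To make your route work you would need two further ingredients:
\begin{enumerate}
\item a count, uniform in \(r\), of totatives of \(r\) lying in a middle range such as \([r/4,3r/4]\);
\item an explicit exceptional set, so that the check ``by hand'' is genuinely finite and visibly yields only \((2,1,6)\).
\end{enumerate}
None of this is supplied. For \(a=2\), \(b=1\) it is precisely the content of Bang's theorem.

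A short repair within your framework is available:
\begin{enumerate}
\item Your formula \(1+4ab\sin^2(\pi k/r)\) is increasing in \(ab=b(b+1)\). Hence \(\Phi_r(b+1,b)>\Phi_r(2,1)\) for \(b\ge2\), and it suffices to treat \(\Phi_r(2,1)\).
\item The M\"obius product \(\Phi_r(2,1)=\prod_{d\mid r}(2^d-1)^{\mu(r/d)}\) gives \(\Phi_r(2,1)\ge 2^{\varphi(r)}\prod_{d\ge1}(1-2^{-d})>2^{\varphi(r)-2}\).
\item This exceeds \(P(r)\) whenever \(P(r)\ge7\) or \(\varphi(r)\ge6\).
\item That leaves \(r\in\{3,4,5,6,8,10,12\}\). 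Direct computation gives \(\Phi_r(2,1)>P(r)\) in each case except \(\Phi_6(2,1)=3=P(6)\).
\end{enumerate}

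Alternatively, you could use the classical reduction. A non-primitive prime can divide \(\Phi_r(a,b)\) only when \(r=ep^k\) with \(e\mid p-1\). In that case \(\Phi_r(a,b)=\Phi_e(A^p,B^p)/\Phi_e(A,B)\), with \(A=a^{p^{k-1}}\) and \(B=b^{p^{k-1}}\). Each factor \(|A^p-\zeta B^p|/|A-\zeta B|\) is then bounded below directly.
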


\begin{corollary}[binomial families]\label{cor:binomial}
Let \(a>b\ge1\) and \(\gcd(a,b)=1\).  After deleting finitely many initial terms, each of the sequences
\[
  a^n-b^n,
  \qquad
  a^n+b^n
\]
is neither \(L^\infty\)-Khintchin nor \(L^1\)-Khintchin.  In particular, \((2^n+1)\) is not Khintchin.
\end{corollary}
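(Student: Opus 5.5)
The plan is to verify the hypothesis of \cref{prop:primitive-PFNP} for each family, using \cref{thm:bang-zsigmondy}. We then discard finitely many initial terms, which \cref{lem:finite-prefix} permits. All primitivity statements below compare a label only with labels of smaller index, which is exactly what \cref{prop:primitive-PFNP} requires.

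\emph{The family \(a^n-b^n\).} For \(n\ge 7\) the exceptional cases of \cref{thm:bang-zsigmondy} do not occur: the case \((2,1,6)\) needs \(r=6\), and the other case needs \(r=2\). So \(a^n-b^n\) has a prime \(\pi_n\) dividing no \(a^s-b^s\) with \(1\le s<n\). This is precisely the primitive-divisor condition for the tail \(n\ge7\). The terms are positive because \(a>b\).

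\emph{The family \(a^n+b^n\).} Use \(a^{2n}-b^{2n}=(a^n-b^n)(a^n+b^n)\). For \(2n\ge7\), take a prime \(\pi\) that divides \(a^{2n}-b^{2n}\) but no \(a^s-b^s\) with \(s<2n\). In particular \(\pi\nmid a^n-b^n\), so \(\pi\mid a^n+b^n\).

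Next take \(m<n\). Then \(a^m+b^m\) divides \(a^{2m}-b^{2m}\), and \(2m<2n\). Hence \(\pi\nmid a^{2m}-b^{2m}\), and therefore \(\pi\nmid a^m+b^m\). So \(\pi_n:=\pi\) is a primitive divisor of \(\lambda_n=a^n+b^n\) relative to all earlier terms, for \(n\ge4\). Again \cref{prop:primitive-PFNP} applies.

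\emph{The special case \((2^n+1)\).} This is \(a=2\), \(b=1\) in the second family.

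The only delicate point is this transfer to \(a^n+b^n\). The case \(r=2\) could be exceptional, but only when \(n=1\), and that term is discarded. The case \((2,1,6)\) corresponds to \(n=3\), which is also discarded. Beyond these, the argument needs only the divisibility \(a^m+b^m\mid a^{2m}-b^{2m}\).
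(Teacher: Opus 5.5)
Your proof is correct and follows the paper's route: you apply Bang--Zsigmondy directly to \(a^n-b^n\), apply it to \(a^{2n}-b^{2n}\) for \(a^n+b^n\), and then invoke \cref{prop:primitive-PFNP}. The only difference is the transfer step for \(a^n+b^n\). You use the factorizations \(a^{2n}-b^{2n}=(a^n-b^n)(a^n+b^n)\) and \(a^m+b^m\mid a^{2m}-b^{2m}\), whereas the paper shows that \(ab^{-1}\) has order exactly \(2n\) modulo \(\pi_n\). Your version is slightly more elementary, since it never needs \(\pi_n\nmid b\), and it also makes the thresholds \(n\ge7\) and \(n\ge4\) explicit.
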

 \begin{proof}
For \(a^n-b^n\), \cref{thm:bang-zsigmondy} gives, for all sufficiently
large \(n\), a prime \(\pi_n\mid a^n-b^n\) which divides no
\(a^m-b^m\) with \(m<n\).  Hence Proposition~\ref{prop:primitive-PFNP}
applies.

For \(a^n+b^n\), apply \cref{thm:bang-zsigmondy} to
\(a^{2n}-b^{2n}\).  For all sufficiently large \(n\), choose a prime
\(\pi_n\mid a^{2n}-b^{2n}\) which divides no \(a^s-b^s\) with \(s<2n\).
If \(\pi_n\mid b\), then \(\pi_n\mid a\), contradicting
\(\gcd(a,b)=1\).  Hence \(ab^{-1}\) is well defined modulo \(\pi_n\).
Moreover \(ab^{-1}\) has order \(2n\), since any smaller positive order
\(s<2n\) would imply \(\pi_n\mid a^s-b^s\).  Thus
\[
  (ab^{-1})^n\equiv -1\pmod{\pi_n}.
\]
Therefore \(\pi_n\mid a^n+b^n\).  If \(m<n\) and
\(\pi_n\mid a^m+b^m\), then \((ab^{-1})^{2m}\equiv1\pmod{\pi_n}\),
contradicting that the order is \(2n\).  Hence \(a^n+b^n\) satisfies the
primitive-divisor condition after deleting finitely many initial terms.
Proposition~\ref{prop:primitive-PFNP} gives the conclusion.
\end{proof}

 \section{Logarithmic perturbations}\label{sec:log}

We next show that \(L^\infty\)-goodness of exponential sequences is unstable
under additive perturbations.  Although \(a^{p(n)}\) is \(L^\infty\)-good in
many cases, for \(\lambda_n=a^{p(n)}+f(n)\), writing
\(\log\lambda_n=\nu_n\log a+\theta_n\) with \(\nu_n\in\mathbb Z\), we prove
that, under suitable assumptions, a logarithmic error \(\theta_n\) which does
not decay too fast on a positive-density set already forces
\((\lambda_n)\) to fail \(L^\infty\)-goodness.

\begin{proposition}[logarithmic criterion for \(\PFNP\)]\label{prop:log-PFNP}
Let \(a\ge2\), and let \((\lambda_n)\) be positive integers such that
\[
  \log\lambda_n=\nu_n\log a+\theta_n,
  \qquad \nu_n\in\Z,
  \qquad \theta_n\in\R.
\]
Assume that there are \(N_0\ge1\), \(\eta>0\), an unbounded set \(\cQ\subset\N\), and a set \(P\subset\{n\ge N_0\}\) such that
\[
  \#(P\cap(Q/2,Q])\ge\eta Q\qquad(Q\in\cQ),
\]
and
\begin{align}
  2\sum_{n\ge N_0}\abs{\theta_n} &<\log a, \label{eq:log-small}\\
  \sum_{\substack{p\in P\\p>n}}\abs{\theta_p} &<\abs{\theta_n}
       \qquad(n\ge N_0,\ n\notin P), \label{eq:log-out}\\
  \sup_{\substack{m>p\\m\notin P}}\abs{\theta_m}
  +2\sum_{\substack{q\in P\\q>p}}\abs{\theta_q} &<\abs{\theta_p}
       \qquad(p\in P), \label{eq:log-in}
\end{align}
where \(\sup\varnothing=0\).  Then \((\lambda_n)\) has \(\PFNP\).
\end{proposition}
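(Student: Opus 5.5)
The plan is to verify \cref{def:PFNP} directly. We take logarithms of the product identities, let \eqref{eq:log-small} absorb the integer part \(\nu_n\log a\), and use \eqref{eq:log-out} and \eqref{eq:log-in} as a ``dominant smallest index'' principle that forces all \(\theta\)-coefficients to vanish. We delete the initial terms with \(n<N_0\), so that every label that occurs is \(\ge N_0\). We keep \(\eta\) and \(\cQ\) as given and set \(E_Q=P\cap(Q/2,Q]\), so \(\#E_Q\ge\eta Q\). For \(Q_1<\cdots<Q_d\) with \(Q_{a+1}>4Q_a\), the blocks \(E_a\) are pairwise disjoint subsets of \(P\).

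The basic reduction is as follows. Suppose \(\prod_n\lambda_n^{\alpha_n}=\prod_n\lambda_n^{\beta_n}\), where each exponent vector has total \(\ell^\infty\)-mass controlled so that \(\sum_n\abs{\alpha_n-\beta_n}\,\abs{\theta_n}\le 2\sum_{n\ge N_0}\abs{\theta_n}\). Taking logarithms gives
\[
  \Bigl(\sum_n(\alpha_n-\beta_n)\nu_n\Bigr)\log a=-\sum_n(\alpha_n-\beta_n)\theta_n .
\]
The left side is an integer multiple of \(\log a\), and by \eqref{eq:log-small} the right side has absolute value \(<\log a\). Hence both sides vanish, and we obtain \(\sum_n\eps_n\theta_n=0\) with \(\eps_n=\alpha_n-\beta_n\in\{-1,0,1,2\}\). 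This applies to both clauses, since in each clause every index carries a net coefficient of absolute value at most \(2\), and only the index \(m\) can reach \(2\).

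For the first clause, all indices lie in \(P\) and \(\eps_n=\mathbf 1_{\bigcup S_a}(n)-\mathbf 1_{\bigcup T_a}(n)\in\{-1,0,1\}\). If \(\eps\ne0\), let \(p\) be the smallest index with \(\eps_p\ne0\). Then \(\abs{\theta_p}\le\sum_{q\in P,\,q>p}\abs{\theta_q}\), which contradicts \eqref{eq:log-in}. So \(\bigcup S_a=\bigcup T_a\), and intersecting with the disjoint blocks gives \(S_a=T_a\).

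The second clause is the main obstacle, because \(m\) need not lie in \(P\). Here \(\eps=\mathbf 1_{\{m\}}+\mathbf 1_{\bigcup S_a}-\mathbf 1_{\bigcup T_a}\).

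First suppose \(m\notin P\). Then \(\theta_m=\sum_{p}\pm\theta_p\), where the sum runs over the symmetric difference \(D\subset P\) of \(\bigcup S_a\) and \(\bigcup T_a\).
\begin{itemize}
\item If \(D=\varnothing\), then \(\theta_m=0\), which contradicts \eqref{eq:log-out}, since \eqref{eq:log-out} forces \(\abs{\theta_m}>0\).
\item Otherwise let \(p=\min D\); note \(p\ne m\). If \(p>m\), then \(\abs{\theta_m}\le\sum_{q\in P,\,q>m}\abs{\theta_q}\), contradicting \eqref{eq:log-out}.
\item If \(p<m\), then \(\abs{\theta_p}\le\abs{\theta_m}+\sum_{q\in P,\,q>p}\abs{\theta_q}\), contradicting \eqref{eq:log-in}; here \(m>p\) with \(m\notin P\), so \(\abs{\theta_m}\) is bounded by the supremum in \eqref{eq:log-in}.
\end{itemize}

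Hence \(m\in P\), and \(\eps\) is supported in \(P\) with \(\abs{\eps_q}\le2\). If \(\eps\ne0\), its smallest support point \(p\) satisfies \(\abs{\theta_p}\le2\sum_{q\in P,\,q>p}\abs{\theta_q}\), again contradicting \eqref{eq:log-in}; this is exactly where the factor \(2\) in \eqref{eq:log-in} is needed. Therefore \(\eps=0\), i.e. the multiset identity \(\{m\}\uplus\biguplus_aS_a=\biguplus_aT_a\) holds.

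It follows that \(m\in\bigcup_aT_a\subset\bigcup_aE_a\). Intersecting the identity with each block \(E_a\) and comparing cardinalities, exactly as in the proof of \cref{prop:primitive-PFNP}, gives \(m\notin E_a\) and \(S_a=T_a\) for \(a\ne i\). It also gives \(m\in E_i\setminus S_i\) and \(T_i=S_i\cup\{m\}\), which is the second clause of \cref{def:PFNP}.
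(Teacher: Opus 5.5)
Your proof is correct and is essentially the paper's argument. Both take logarithms, use \eqref{eq:log-small} to kill the integer multiple of \(\log a\), and let the smallest index with non-zero coefficient dominate the tail via \eqref{eq:log-out} or \eqref{eq:log-in}. The differences are minor: you first exclude \(m\notin P\) by comparing \(\min D\) with \(m\), whereas the paper treats both cases at once by asking whether the smallest non-zero index lies in \(P\). Also, if the deleted prefix is re-indexed as in \cref{lem:finite-prefix}, the blocks shift and \(\eta\) must be decreased slightly, as the paper notes, rather than kept as given.
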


\begin{proof}
We first prove the product rigidity behind the criterion.  Let \(F\subset P\) be finite, let \(m\ge N_0\), let \(\eps\in\{0,1\}\), and let \(\alpha_n,\beta_n\in\{0,1\}\) for \(n\in F\).  Suppose
\begin{equation}\label{eq:rigidity-identity}
  \lambda_m^\eps\prod_{n\in F}\lambda_n^{\alpha_n}
  =\prod_{n\in F}\lambda_n^{\beta_n}.
\end{equation}
Set
\[
  \gamma_k=\eps\mathbf 1_{k=m}+\mathbf 1_{k\in F}(\alpha_k-\beta_k),
\]
where \(\alpha_k=\beta_k=0\) outside \(F\).  Taking logarithms in \eqref{eq:rigidity-identity} gives
\[
  A\log a+B=0,
  \qquad
  A=\sum_k\gamma_k\nu_k\in\Z,
  \qquad
  B=\sum_k\gamma_k\theta_k.
\]
The support of \(\gamma\) is contained in \(F\cup\{m\}\), and each index contributes at most twice.  Hence \eqref{eq:log-small} gives \(\abs{B}<\log a\).  Since \(A\) is an integer, \(A=0\), and then \(B=0\).

Assume that some \(\gamma_k\ne0\), and let \(k_0\) be the smallest such index.  If \(k_0\notin P\), then \(k_0=m\), \(\gamma_{k_0}=1\), and all later non-zero coefficients lie on \(P\) with absolute value at most one.  By \eqref{eq:log-out},
\[
  \sum_{k>k_0}\abs{\gamma_k\theta_k}
  \le \sum_{\substack{p\in P\\p>k_0}}\abs{\theta_p}
  <\abs{\theta_{k_0}},
\]
which contradicts \(B=0\).

If \(k_0\in P\), then later coefficients consist of at most one index outside \(P\), with coefficient one, and indices in \(P\), with absolute coefficients at most two.  By \eqref{eq:log-in},
\[
  \sum_{k>k_0}\abs{\gamma_k\theta_k}
  <\abs{\theta_{k_0}}\le \abs{\gamma_{k_0}\theta_{k_0}},
\]
again contradicting \(B=0\).  Thus \(\gamma_k=0\) for all \(k\).

If \(\eps=0\), this says \(\alpha_n=\beta_n\) for all \(n\in F\).  If \(\eps=1\), then \(\gamma_m=0\) forces \(m\in F\), \(\alpha_m=0\), \(\beta_m=1\), and \(\alpha_n=\beta_n\) for \(n\ne m\).

We now verify \(\PFNP\).  By deleting finitely many initial terms and slightly decreasing \(\eta\), we may use the upper blocks
\[
  E_Q=P\cap(Q/2,Q]
\]
for \(Q\in\cQ\).  The preceding paragraph with \(\eps=0\) gives uniqueness of product layers.  The same paragraph with \(\eps=1\) gives the one-step condition, because all indices in the chosen blocks lie in \(P\).  Therefore \(\PFNP\) holds.
\end{proof}

\begin{proposition}[geometric logarithmic tails]\label{prop:geometric-tail}
In the setting of Proposition~\ref{prop:log-PFNP}, assume that for some \(0<\rho<1\),
\[
  0<\abs{\theta_{n+1}}\le \rho\abs{\theta_n}
  \qquad(n\ge n_0).
\]
Then \((\lambda_n)\) has \(\PFNP\).  Consequently \((\lambda_n)\) is neither \(L^\infty\)-Khintchin nor \(L^1\)-Khintchin.
\end{proposition}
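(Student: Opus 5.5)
The plan is to deduce this from Proposition~\ref{prop:log-PFNP}, taking for \(P\) a sparse arithmetic progression of indices. Geometric decay then makes each \(\abs{\theta_n}\) dominate everything after it in the sums appearing in \eqref{eq:log-out} and \eqref{eq:log-in}. Non-Khintchinness then follows from \cref{thm:PFNP-obstruction}.

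\emph{Choice of \(K\).} Since \(0<\rho<1\), we have \(\rho^K\to0\). Fix an integer \(K\ge2\) so large that
\[
  \rho+\frac{2\rho^K}{1-\rho^K}<1 .
\]
This also gives \(\rho+\rho^K/(1-\rho^K)<1\).

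\emph{Choice of \(N_0\).} Iterating the hypothesis gives \(\abs{\theta_{n+j}}\le\rho^j\abs{\theta_n}\) for \(n\ge n_0\) and \(j\ge0\). Hence \(\sum_{n\ge N}\abs{\theta_n}\le\abs{\theta_N}/(1-\rho)\to0\). Choose \(N_0\ge n_0\) with
\[
  \sum_{n\ge N_0}\abs{\theta_n}<\tfrac12\log a ,
\]
which is \eqref{eq:log-small}.

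\emph{Choice of \(P\), \(\cQ\), \(\eta\).} Put \(P=\{n\ge N_0: K\mid n\}\). For \(Q\ge\max(4N_0,8K)\), the interval \((Q/2,Q]\) contains at least \(Q/(2K)-1\ge Q/(4K)\) multiples of \(K\), all of them \(\ge N_0\). So we may take \(\eta=1/(4K)\) and \(\cQ=\{Q\in\N: Q\ge\max(4N_0,8K)\}\).

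\emph{Verification of \eqref{eq:log-out}.} Let \(n\ge N_0\) with \(n\notin P\). The elements of \(P\) above \(n\) are \(p_0<p_0+K<p_0+2K<\cdots\) with \(p_0\ge n+1\). Therefore
\[
  \sum_{p\in P,\ p>n}\abs{\theta_p}\le\abs{\theta_n}\,\rho\sum_{j\ge0}\rho^{jK}
  =\frac{\rho}{1-\rho^K}\abs{\theta_n}.
\]
This is strictly less than \(\abs{\theta_n}\), because \(\rho<1-\rho^K\) by the choice of \(K\) and \(\theta_n\ne0\).

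\emph{Verification of \eqref{eq:log-in}.} Let \(p\in P\). Any \(m>p\) satisfies \(\abs{\theta_m}\le\rho\abs{\theta_p}\). The later elements of \(P\) are \(p+K,p+2K,\dots\), so their sum is at most \(\rho^K(1-\rho^K)^{-1}\abs{\theta_p}\). Hence the left side of \eqref{eq:log-in} is at most
\[
  \Bigl(\rho+\frac{2\rho^K}{1-\rho^K}\Bigr)\abs{\theta_p}<\abs{\theta_p}.
\]
Here strictness again uses \(\theta_p\ne0\). This is exactly where the eventual non-vanishing of \(\theta_n\) is needed.

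\emph{Conclusion.} Proposition~\ref{prop:log-PFNP} now gives \(\PFNP\), and \cref{thm:PFNP-obstruction} gives the failure of both the \(L^\infty\)- and \(L^1\)-Khintchin properties.

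\emph{Main obstacle.} The only delicate point is that \(\rho\) may be close to \(1\), in which case \(P=\{n\ge N_0\}\) does not work: the factor \(2\) in \eqref{eq:log-in} requires roughly \(\rho<1/3\). The thinning to multiples of \(K\) is the step that handles this, at the cost of lowering the density constant to \(\eta=1/(4K)\), which is harmless.
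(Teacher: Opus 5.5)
Your proof is correct and is essentially the paper's argument. The paper likewise takes \(P\) to be a single residue class modulo a large \(L\), restricted to \(n\ge N_0\), with \(L\) chosen so that \(\rho/(1-\rho^L)<1\) and \(\rho+2\rho^L/(1-\rho^L)<1\). It then checks \eqref{eq:log-small}, \eqref{eq:log-out} and \eqref{eq:log-in} by the same geometric-series bounds and concludes via Proposition~\ref{prop:log-PFNP} and \cref{thm:PFNP-obstruction}. Your explicit density count and your observation that the second inequality implies the first are slightly more detailed than the paper's sketch.
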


\begin{proof}
Choose \(N_0\ge n_0\) so large that \eqref{eq:log-small} holds.  Choose \(L\ge1\) with
\[
  \frac{\rho}{1-\rho^L}<1,
  \qquad
  \rho+\frac{2\rho^L}{1-\rho^L}<1.
\]
Let \(P\) be one residue class modulo \(L\), restricted to \(n\ge N_0\).  It has positive density in upper blocks.

If \(n\notin P\), then the first element of \(P\) larger than \(n\) is at distance at least one.  Hence
\[
  \sum_{\substack{p\in P\\p>n}}\abs{\theta_p}
  \le \frac{\rho}{1-\rho^L}\abs{\theta_n}
  <\abs{\theta_n}.
\]
If \(p\in P\), then
\[
  \sup_{\substack{m>p\\m\notin P}}\abs{\theta_m}\le \rho\abs{\theta_p},
  \qquad
  2\sum_{\substack{q\in P\\q>p}}\abs{\theta_q}
  \le \frac{2\rho^L}{1-\rho^L}\abs{\theta_p}.
\]
The second choice of \(L\) gives \eqref{eq:log-in}.  Thus Proposition~\ref{prop:log-PFNP} applies, and \cref{thm:PFNP-obstruction} gives the non-Khintchin conclusion.
\end{proof}

\begin{proof}[Proof of \cref{thm:perturbed-polynomial-powers}]
For all sufficiently large \(n\), we have
\[
  \log\lambda_n=p(n)\log a+\theta_n,
  \qquad p(n)\in\Z.
\]
The assumed geometric decay of \(\abs{\theta_n}\) allows us to apply Proposition~\ref{prop:geometric-tail}.  Hence \((\lambda_n)\) is not \(L^\infty\)-Khintchin.  It is therefore not \(L^1\)-Khintchin.

It remains to check the constant perturbation.  Let \(f(n)=c\ne0\).  If \(p\) is eventually constant and the tail \(a^{p(n)}+c\) is positive, then the sequence is eventually constant; choosing a non-zero character shows that the averages cannot converge a.e. to the mean for all bounded functions.  If \(p\) is not eventually constant, then eventual increase of the integer polynomial gives
\[
  p(n+1)-p(n)\ge1
\]
for all large \(n\).  Since
\[
  \theta_n=\log(1+c a^{-p(n)})\sim c a^{-p(n)},
\]
we have
\[
  \frac{\abs{\theta_{n+1}}}{\abs{\theta_n}}
  \sim a^{-(p(n+1)-p(n))}\le a^{-1}<1.
\]
Thus \(\abs{\theta_n}\) eventually satisfies the geometric hypothesis.  After deleting the finite initial part needed for positivity, the conclusion follows from the first part of the theorem.
\end{proof}

\end{document}